\newtheorem{thm}{Theorem}
\newtheorem{cor}{Corollary}
\newtheorem{df}{Definition}
\newtheorem{prop}{Proposition}
\newtheorem{lem}{Lemma}
\newtheorem{example}{Example}
\theoremstyle{remark}
\begin{document}

\begin{center}
\noindent\textbf{\Large Bipartization of graphs}
\end{center}

\begin{center}{Mateusz Miotk, Jerzy Topp, and Pawe\l{}~\.Zyli\'nski\\[1mm]
University of Gda\'{n}sk, 80-952 Gda\'nsk, Poland\\
{\small \texttt{\{mmiotk,j.topp,zylinski\}@inf.ug.edu.pl}}}\end{center}

\date{}

\begin{abstract}
\noindent
A dominating set of a graph $G$ is a set $D\subseteq V_G$ such that every vertex in $V_G-D$ is adjacent to at least one vertex in $D$, and the domination number $\gamma(G)$ of $G$ is the minimum cardinality of a dominating set of $G$.  In this paper we provide a~new characterization of bipartite graphs whose domination number is equal to the cardinality of its smaller partite set. Our characterization is based upon a~new graph operation.\end{abstract}

{\small \textbf{Keywords:} Bipartite graph; Bipartization; Domination number.} \\
\indent {\small \textbf{AMS subject classification: 05C69; 05C76; 05C05.}}

\section{Introduction and notation}\label{sec:intro} For notation and graph theory terminology we in general follow \cite{ChLZ15}. Specifically, let $G=(V_G,E_G)$ be a graph with vertex set $V_G$ and edge set $E_G$. For a subset $X \subseteq~V_G$, the   {\em subgraph induced} by $X$ is denoted by $G[X]$. For simplicity of notation, if $X=\{x_1,\ldots, x_k\}$, we shall write $G[x_1,\ldots, x_k]$ instead of $G[\{x_1,\ldots, x_k\}]$. For a~vertex $v$ of $G$, its {\em neighborhood\/}, denoted by $N_{G}(v)$, is the set of all vertices adjacent to $v$, and the cardinality of $N_G(v)$, denoted by $\deg_G(v)$, is called the {\em degree} of~$v$. The {\em closed neighborhood\/} of $v$, denoted by $N_{G}[v]$, is the set $N_{G}(v)\cup \{v\}$.  In general, the {\em  neighborhood\/} of $X \subseteq V_G$, denoted by $N_{G}(X)$, is defined to be $\bigcup_{v\in X}N_{G}(v)$, and the {\em closed\/} neighborhood of $X$, denoted by $N_{G}[X]$, is the set $N_{G}(X)\cup X$. A vertex of degree one is called a {\em leaf}, and the only neighbor of a leaf is called its {\em support vertex} (or simply, its {\em support}). A {\em weak support} is a vertex adjacent to exactly one leaf. Finally, the set of leaves and the set of supports of $G$ we denoted by~$L_G$ and $S_G$, respectively.

A subset $D$ of $V_G$ is said to be a {\em dominating set} of a graph $G$ if each vertex belonging to the set $V_G -D$ has a~neighbor in $D$. The cardinality of a~minimum dominating set of $G$ is called the~{\em domination number of $G$} and is denoted by $\gamma(G)$. A subset $C \subseteq V_G$ is a~{\em covering set} of $G$ if each edge of $G$ has an end-vertex in $C$. The cardinality of a~minimum covering set of $G$ is called the~{\em covering number of $G$} and denoted by $\beta(G)$.

It is obvious that if $G=((A,B),E_G)$ is a bipartite graph, then $\gamma(G)\le \min\{|A|,|B|\}$. In this paper the set of all bipartite graphs $G=((A,B),E_G)$ in which $\gamma(G)= \min\{|A|,|B|\}$ is denoted by ${\cal B}$. Some properties of the graphs belonging to the set  ${\cal B}$ were observed in the papers \cite{AJBT13,HR95,MTZxx,RV98,WY12}, where all graphs with the domination number equal to the covering number were characterized. In this paper, inspired by results and constructions of Hartnell and Rall \cite{HR95}, we introduce a~new graph operation, called the {\em  bipartization of a graph with respect to a~function\/}, study basic properties of this operation, and provide a new characterization of the graphs belonging to the set ${\cal B}$ in terms of this new operation.

\section{Bipartization of a graph}\label{sec:bip}
Let ${\cal K}_H$ denote the set of all complete subgraphs of a graph $H$.
If $v\in V_H$, then the set $\{K\in {\cal K}_H \colon v \in V_K\}$ is denoted
by ${\cal K}_H(v)$. If $X\subseteq V_H$, then the set $\bigcup_{v\in X}{\cal K}_H(v)$ is denoted by ${\cal K}_H(X)$, and it is obvious that ${\cal K}_H(X)= \{K\in {\cal K}_H
\colon V_K\cap X\not=\emptyset\}$. Let $f\colon {\cal K}_H\to \mathbb{N}$ be a function. If $K\in {\cal K}_H$, then by ${\cal F}_K$ we denote the set $\{(K,1),\ldots, (K,f(K))\}$ if $f(K)\ge 1$, and we let ${\cal F}_K=\emptyset$ if $f(K)=0$. By ${\cal K}_H^f$ we denote the set of all positively $f$-valued complete subgraphs of $H$, that is, ${\cal K}_H^f=\{K\in {\cal{K}}_H\colon f(K)\ge 1\}$.

\begin{df} Let $H$ be a graph and let $f\colon {\cal K}_H\to \mathbb{N}$ be a function.
The {\em bipartization} of $H$ with respect to $f$ is the bipartite graph $B_f(H) =((A,B),E_{B_f(H)})$ in which $A=V_H$, $B=\bigcup_{K\in {\cal K}_H}{\cal F}_K$, and where a vertex $x\in A$ is adjacent to a vertex $(K,i)\in B$ if and only if $x$ is a vertex of the complete graph $K$ $($$i=1,\ldots, f(K)$$)$. \end{df}

\begin{example} \label{przyklad-1}  {\rm Fig.~\ref{rys-grafu-H_f} presents a graph $H$ (for which ${\cal K}_H=\{H[a], H[b], H[c], H[d], H[a, b],$ $H[a,c], H[b,c], H[c,d], H[a,b,c]\}$) and its two bipartizations $B_f(H)$ and $B_g(H)$ with respect to functions $f,\,g\colon {\cal K}_H\to \mathbb{N}$, respectively, where $f(H[a])=1$, $f(H[b])=1$, $f(H[c])=2$, $f(H[d])=0$, $f(H[a,b])=3$, $f(H[a,c])=0$, $f(H[b,c])=2$, $f(H[c,d])=3$, $f(H[a, b, c])=1$, while $g(H[v])=0$ for every vertex $v \in V_H$, $g(H[u,v])=1$ for every edge $uv \in E_H$, and $g(H[a,b,c]) = 0$. Observe that $B_g(H)$ is the subdivision graph $S(H)$ of $H$ (i.e., the graph obtained from $H$ by inserting a new vertex into each edge of $H$).} \end{example}

\begin{figure}[!h]
\begin{center}\special{em:linewidth 0.4pt}
\unitlength 0.25ex \linethickness{0.4pt}\begin{picture}(260,150)
\put(-20,30){\path(0,0)(60,0)(30,52)(0,0)\path(30,52)(30,112)\put(2,95){${}^{H}$}
\multiput(0,0)(60,0){2}{\circle*{2.5}}\multiput(30,52)(0,60){2}{\circle*{2.5}}
\put(-2,-9){${}^{a}$}\put(59,-10){${}^{b}$}\put(24,48){${}^{c}$}\put(24,107){${}^{d}$}}
\put(100,30){\path(60,0)(0,0)\path(30,52)(30,112)\path(0,0)(30,17)(60,0) \path(30,17)(30,52)\put(-10,95){${}^{B_f(H)}$}\path(-25,0)(0,0)\path(85,0)(60,0)
\multiput(0,0)(60,0){2}{\circle*{2.5}}\multiput(30,52)(0,60){2}{\circle*{2.5}}
\path(0,0)(30,-10)(60,0)\path(0,0)(30,-20)(60,0)\path(30,52)(20,82)(30,112) \path(30,52)(40,82)(30,112)\path(30,52)(40,23)(60,0)\path(30,52)(50,29)(60,0)
\path(53,44)(30,52)(53,60)\put(-2,-9){${}^{a}$}\put(60,-10){${}^{b}$}\put(24,48){${}^{c}$}
\put(23,107){${}^{d}$}\put(55,55){${}^{(H[c],1)}$}\put(55,39){${}^{(H[c],2)}$}
\put(-14,35){${}^{(H[a,b,c],1)}$}\path(-14,35)(18,35)(28,20)\put(-38,1){${}^{(H[a],1)}$}
\put(74,1){${}^{(H[b],1)}$}\put(28,20){\vector(1,-2){0}}
\put(52,24){${}^{(H[b,c],1)}$}\put(69,12){${}^{(H[b,c],2)}$}\path(96,12)(69,12)(42,22)
\put(42,22){\vector(-2,1){0}}\put(16,0){${}^{(H[a,b],1)}$}\put(46,-20){${}^{(H[a,b],2)}$}
\path(74,-20)(47,-20)(32,-11)\put(32,-11){\vector(-2,1){0}}\put(16,-30){${}^{(H[a,b],3)}$}
\put(-10,77){${}^{(H[c,d],1)}$}\put(42,89){${}^{(H[c,d],2)}$}
\path(32,83)(43,89)(70,89)\put(32,83){\vector(-2,-1){0}}\put(42,77){${}^{(H[c,d],3)}$}
\put(30,17){\whiten\circle{2.5}}\put(-25,0){\whiten\circle{2.5}}
\put(85,0){\whiten\circle{2.5}}\multiput(30,0)(0,-10){3}{\whiten\circle{2.5}}
\multiput(20,82)(10,0){3}{\whiten\circle{2.5}}\put(40,23){\whiten\circle{2.5}}
\put(50,29){\whiten\circle{2.5}}\put(53,44){\whiten\circle{2.5}}\put(53,60){\whiten\circle{2.5}}
\put(120,0){\path(0,0)(60,0)(30,52)(0,0)\path(30,52)(30,112)
           \put(-3,95){${}^{B_g(H)}$}\multiput(0,0)(60,0){2}{\circle*{2.5}}
           \multiput(30,52)(0,60){2}{\circle*{2.5}}
           \multiput(30,0)(0,82){2}{\whiten\circle{2.5}}
           \multiput(15,26)(30,0){2}{\whiten\circle{2.5}}
           \put(-2,-9){${}^{a}$}\put(59,-10){${}^{b}$}\put(24,48){${}^{c}$}
           \put(24,107){${}^{d}$}\put(-3,77){${}^{(H[c,d],1)}$}
           \put(-18,21){${}^{(H[a,c],1)}$}\put(50,21){${}^{(H[b,c],1)}$}
           \put(16,-13){${}^{(H[a,b],1)}$} }
}\end{picture} \caption{Graphs $H$, $B_f(H)$, and $B_g(H)$.} \label{rys-grafu-H_f} \end{center}\end{figure}
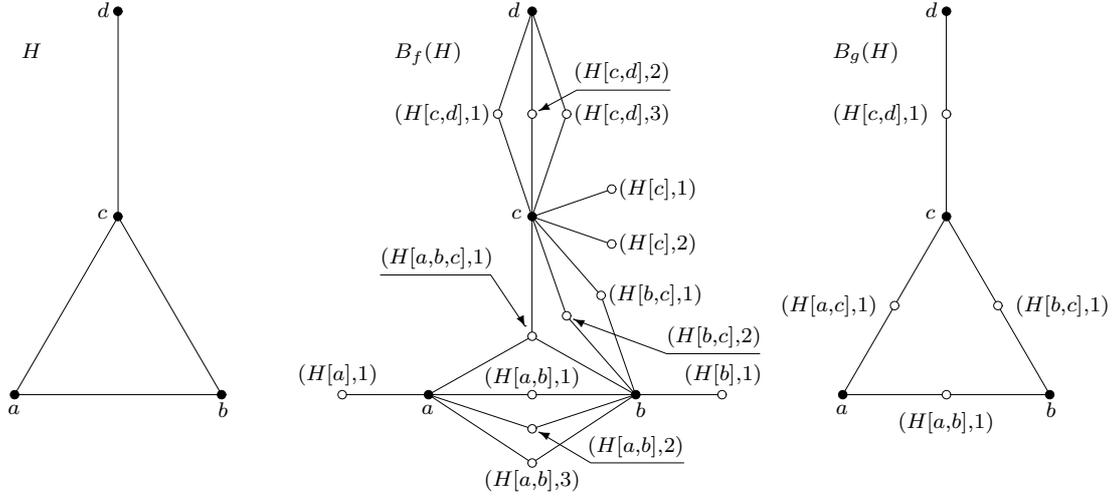

\section{Properties of bipartizations of graphs}

It is clear from the above definition of the bipartization of a graph with respect to a~function that we have the following proposition.

\begin{prop} \label{Proposition-first} The bipartization of a graph with respect to a~function has the following properties:
\begin{itemize} \item[$(1)$] If $B_f(H)=((A,B),E_{B_f(H)})$ is the bipartization of a graph $H$ with respect to a~function $f\colon {\cal K}_H\to \mathbb{N}$, then: \begin{itemize}
    \item[$(\rm a)$] $N_{B_f(H)}(v)= \bigcup_{K\in {\cal K}_H(v)}{\cal F}_K$ if $v\in A$. \item[$(\rm b)$] $N_{B_f(H)}(X)= \bigcup_{K\in {\cal K}_H(X)}{\cal F}_K$ if $X\subseteq A$. \item[$(\rm c)$] $N_{B_f(H)}((K,i))= V_K$ if $(K,i)\in B$ $($$i=1,\ldots, f(K)$$)$. \item[$(\rm d)$] $|V_{B_f(H)}| = |V_H| + \sum_{K \in {\cal K}_H} f(K)$ and $|E_{B_f(H)}| = \sum_{K \in {\cal K}_H} f(K) \, |V_K|$. \end{itemize}
\item[$(2)$] If $H$ is a connected graph and $f \colon {\cal{K}}_H \to \mathbb{N}$ is a function such that every edge of $H$ belongs to a positively $f$-valued complete subgraph of $H$, then the bipartization $B_f(H)$ is a connected graph.
\item[$(3)$] If $H$ is a graph and $f,\, g \colon {\cal K}_H\to \mathbb{N}$ are functions such that $f(K) \ge g(K)$ for every $K\in {\cal K}_H$, then the graph $B_g(H)$ is an induced subgraph of $B_f(H)$.
\end{itemize}\end{prop}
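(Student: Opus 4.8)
The plan is to derive all three items directly from the definition of $B_f(H)$; none of them requires more than careful bookkeeping, and the only mildly delicate points are the disjointness claims in $(\mathrm d)$ and the corner cases in $(2)$.

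For item $(1)$ I would start from the single adjacency rule of $B_f(H)$: a vertex $x\in A$ is adjacent to $(K,i)\in B$ exactly when $x\in V_K$. Reading this with $x$ fixed, the neighbours of $v\in A$ are precisely the pairs $(K,i)$ with $v\in V_K$ and $1\le i\le f(K)$, i.e. the elements of $\bigcup_{K\in{\cal K}_H(v)}{\cal F}_K$; this gives $(\mathrm a)$, and $(\mathrm b)$ then follows by taking the union over $v\in X$ and using the already noted identity ${\cal K}_H(X)=\{K\in{\cal K}_H : V_K\cap X\neq\emptyset\}$. Reading the adjacency rule with $(K,i)$ fixed gives $(\mathrm c)$: the neighbours of $(K,i)$ are exactly the vertices of $V_H$ that lie in $V_K$, namely $V_K$. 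For $(\mathrm d)$, the point to make explicit is that the sets ${\cal F}_K$, $K\in{\cal K}_H$, are pairwise disjoint (each pair records the clique it comes from), so $|B|=\sum_{K\in{\cal K}_H}f(K)$ and hence $|V_{B_f(H)}|=|A|+|B|=|V_H|+\sum_{K\in{\cal K}_H}f(K)$; similarly $E_{B_f(H)}$ is the disjoint union, over the $f(K)$ vertices $(K,i)$ associated with each $K$, of the edge sets $\{x(K,i):x\in V_K\}$ of size $|V_K|$, which yields $|E_{B_f(H)}|=\sum_{K\in{\cal K}_H}f(K)\,|V_K|$.

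For item $(2)$ the key idea is an edge-lifting: if $xy\in E_H$, the hypothesis provides a complete subgraph $K$ of $H$ with $f(K)\ge 1$ containing both $x$ and $y$, and then $x,(K,1),y$ is a path in $B_f(H)$ because $x,y\in V_K$. Since $H$ is connected, any two vertices of $A=V_H$ are joined by a path in $H$, and replacing each edge of that path by the corresponding length-two path produces a walk in $B_f(H)$; thus $A$ is contained in a single component. By $(\mathrm c)$ every $(K,i)\in B$ is adjacent to each vertex of the nonempty set $V_K\subseteq A$, so it lies in that same component; hence $B_f(H)$ is connected. (The degenerate case $H=K_1$ is covered as well, the empty edge set making the lifting step vacuous.)

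For item $(3)$, from $g(K)\le f(K)$ for all $K$ we get ${\cal F}_K$ computed from $g$ contained in ${\cal F}_K$ computed from $f$, hence $V_{B_g(H)}\subseteq V_{B_f(H)}$. It then remains to verify that the subgraph of $B_f(H)$ induced by $V_{B_g(H)}$ is exactly $B_g(H)$: an edge of $B_f(H)$ with both ends in $V_{B_g(H)}$ joins some $x\in A$ to some $(K,i)$ with $i\le g(K)$ and $x\in V_K$, which is precisely an edge of $B_g(H)$, and conversely. I expect no real obstacle here; as noted, the whole proposition is a matter of unwinding the definition carefully, the only things worth stating carefully being the disjointness used in $(\mathrm d)$ and the lifting argument together with the $H=K_1$ corner case in $(2)$.
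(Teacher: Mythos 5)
Your proof is correct. The paper states this proposition without proof, asserting that it is clear from the definition, and your argument is exactly the routine verification the authors had in mind --- the pairwise disjointness of the sets ${\cal F}_K$ used in $(\mathrm d)$ and the edge-lifting step in $(2)$ are indeed the only points worth making explicit.
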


Our study of properties of bipartizations we begin by showing that every bipartite graph is the bipartization of some graph with respect to some function.

\begin{thm}\label{thm:bip_existence} For every bipartite graph $G=((A,B),E_G)$ there exist a graph $H$ and a~function $f\colon {\cal K}_H\to \mathbb{N}$ such that $G=B_f(H)$. \end{thm}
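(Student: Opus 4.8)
The plan is to \emph{reconstruct} $H$ on the vertex set $A$ in such a way that every neighbourhood $N_G(b)$, $b\in B$, automatically becomes the vertex set of a complete subgraph of $H$, and then to distribute the vertices of $B$ among the copies $\mathcal{F}_K$ according to which clique each of them ``sees''.

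Concretely, first I would set $H=(A,E_H)$ with $aa'\in E_H$ if and only if $a\neq a'$ and there is a vertex $b\in B$ such that $\{a,a'\}\subseteq N_G(b)$. By this very definition, for each $b\in B$ the set $N_G(b)$ induces a complete subgraph of $H$, so $K_b:=H[N_G(b)]\in {\cal K}_H$ is well defined. (A vertex $b\in B$ of degree $0$ would force the empty complete subgraph; this is the only delicate point, and it is harmless — one either assumes $G$ has no isolated vertices, or adopts the convention that the empty graph belongs to ${\cal K}_H$.) Next I would define $f\colon {\cal K}_H\to\mathbb{N}$ by $f(K)=|\{b\in B\colon N_G(b)=V_K\}|$, so that $f$ records exactly how many vertices of $B$ see the clique $K$.

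It then remains to exhibit an isomorphism $\varphi\colon G\to B_f(H)$. Take $\varphi$ to be the identity on $A=V_H$, and on $B$ let $\varphi$ send the $f(K)$ vertices of $\{b\in B\colon N_G(b)=V_K\}$ bijectively onto ${\cal F}_K=\{(K,1),\ldots,(K,f(K))\}$, for each $K\in {\cal K}_H^f$. Since the family $\{\,\{b\in B\colon N_G(b)=V_K\}\colon K\in {\cal K}_H\,\}$ partitions $B$ (each $b$ lies in the block indexed by $K_b$), the restriction of $\varphi$ to $B$ is a bijection onto $\bigcup_{K\in {\cal K}_H}{\cal F}_K$, the $B$-side of $B_f(H)$. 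Finally I would check adjacency: for $a\in A$ and $b\in B$ with $\varphi(b)=(K_b,i)$, we have $ab\in E_G$ iff $a\in N_G(b)=V_{K_b}$, which by the definition of $B_f(H)$ (equivalently, by Proposition~\ref{Proposition-first}(1)(c)) is precisely the condition for $a$ to be adjacent to $(K_b,i)$ in $B_f(H)$; as both graphs are bipartite with the displayed partitions and $\varphi$ respects the sides, there are no further edges to test. Hence $\varphi$ is an isomorphism, and after identifying each $b\in B$ with $\varphi(b)$ we obtain $G=B_f(H)$, proving Theorem~\ref{thm:bip_existence}. The only real work is the bookkeeping that makes the neighbourhoods of $B$-vertices into cliques of $H$ — and that is built into the construction of $H$ — so I do not expect any genuine obstacle beyond the degree-$0$ caveat noted above.
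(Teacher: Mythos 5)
Your construction is essentially identical to the paper's: for vertices of $A$ in a bipartite graph, ``having a common neighbour in $B$'' is exactly the condition $d_G(a,a')=2$ used in the paper to define $E_H$, and your function $f(K)=|\{b\in B\colon N_G(b)=V_K\}|$ and the isomorphism (identity on $A$, the classes $\{b\colon N_G(b)=V_K\}$ mapped bijectively onto ${\cal F}_K$) coincide with the paper's argument via similarity classes on $B$. The degree-zero caveat you flag is real but is equally present, and equally glossed over, in the paper's own proof, so your write-up is if anything slightly more careful.
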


\begin{proof} We say that vertices $x$ and $y$ of $G$ are {\em similar\/} if $N_G(x)=N_G(y)$. It is obvious that this similarity is an equivalence relation on $B$ (as well as on $A$ and $A \cup B$). Let $B_1,\ldots, B_l$ be the equivalence classes of this  relation on $B$, say $B_i= \{b_1^i,b_2^i,\ldots,b_{k_i}^i\}$ for $i=1,\ldots, l$. It follows from  properties of the equivalence classes that $|B_1|+\ldots+|B_l|= |B|$, $N_G(b_1^i)=N_G(x)$ for every $x\in B_i$, and $N_G(b_1^i)\not =N_G(b_1^j)$ if $i, j\in\{ 1,\ldots,l\}$ and $i\not=j$.

Now, let $H=(V_{H},E_{H})$ be a graph in which $V_H=A$ and two vertices $x$ and $y$ are adjacent in $H$ if and only if they are at distance two apart from each other in $G$. Let ${\cal K}_H$ be the set of all complete subgraphs  of $H$, and let $f\colon {\cal K}_H\to \mathbb{N}$ be a function such that $f(K)= |\{b\in B\colon N_G(b)=V_K\}|$ for $K\in {\cal K}_H$. Next, let $K_i$ be the induced subgraph $H[N_G(b_1^i)]$ of $H$. It follows from the definition of $H$ that $K_i$ is a complete subgraph of $H$. In addition, from the definition of $f$ and from properties of the classes $B_1,\ldots, B_l$, it follows that $f(K_i)=|B_i|>0$ ($i=1,\ldots,l$), and $f(K)=0$ if $K\in {\cal K}_H-\{K_1,\ldots, K_l\}$. Consequently, ${\cal K}_H^f=\{ K_1,\ldots,K_l\}$.

Finally, consider the bipartite graph $B_f(H)=((X,Y),E_{B_f(H)})$ in which  $X=V_H=A$, $Y= \bigcup_{K\in {\cal K}_H}{\cal F}_K = \bigcup_{K\in {\cal K}_H^f}{\cal F}_K = \bigcup_{i=1}^{l} \{(K_i,1),\ldots,(K_i,k_i)\}$, and where $N_{B_f(H)}((K_i,j))$ $= V_{K_i}=N_G(b_1^i)$ for every $(K_i,j)\in Y$. Now, one can observe that the function $\varphi\colon A\cup B\to X\cup Y$, where $\varphi(x)=x$ if $x\in A$, and $\varphi(b_j^i)= (K_i,j)$ if $b_j^i\in B$, is an isomorphism between graphs $G$ and $B_f(H)$. \end{proof}

We have proved that a bipartite graph $G=((A,B),E_G)$ is the bipartization $B_f(H)$
of a~graph $H=(V_H,E_H)$ (in which $V_H=A$ and $E_H=\{xy\colon \,x,y \in A\,\, \mbox{and}\,\, d_G(x,y)=2\}$) with respect to a function $f\colon {\cal K}_H\to \mathbb{N}$, where $f(K)= |\{b\in B\colon N_G(b)=V_K\}|$ for $K\in {\cal K}_H$. The same graph $G$ is also the bipartization $B_g(F)$ of a graph $F=(V_F,E_F)$ (in which $V_F=B$ and $E_F=\{xy\colon \,x,y \in B\,\, \mbox{and}\,\, d_G(x,y)=2\}$) with respect to a function $g\colon {\cal K}_F\to \mathbb{N}$, where $g(K)= |\{a\in A\colon N_G(a)=V_K\}|$ for $K\in {\cal K}_F$. Consequently, every bipartite graph may be the bipartization of two non-isomorphic graphs.

\begin{example} \label{przyklad-2} {\rm Fig.~\ref{rys-grafu-bipatrization-dwa+} depicts the bipartite graph $G$ which is the bipartization of the non-isomorphic graphs $H$ and $F$ with respect to functions $\overline{f}\colon {\cal K}_H \to \mathbb{N}$ and $\overline{g}\colon {\cal K}_F\to \mathbb{N}$, respectively, which non-zero values are displayed in the figure. } \end{example}

\begin{figure*}[!h] \begin{center}
\special{em:linewidth 0.4pt}
\unitlength 0.25ex \linethickness{0.4pt}\begin{picture}(260,125)
\put(-20,10){\special{em:linewidth 0.4pt}
\unitlength 0.25ex \linethickness{0.4pt}\begin{picture}(80,110)
\path(5,10)(40,30)(58,20)(58,0)\path(58,20)(75,30)
\path(40,30)(40,90)(22,100)\path(40,90)(58,100)
\multiput(22,20)(36,0){2}{\circle*{2.5}}\multiput(40,50)(0,40){2}{\circle*{2.5}}
\multiput(40,30)(0,40){2}{\whiten\circle{2.5}}\multiput(22,100)(36,0){2}{\whiten\circle{2.5}}
\put(5,10){\whiten\circle{2.5}}\put(58,0){\whiten\circle{2.5}}\put(75,30){\whiten\circle{2.5}}
\put(3,1){${}^{x}$}\put(17,95){${}^{t}$}\put(20,11){${}^{a}$}\put(37.25,21){${}^{w}$}
\put(56.5,20){${}^{b}$}\put(60.5,-5){${}^{z}$}\put(73,20.5){${}^{y}$}\put(42,45){${}^{c}$}
\put(42,65){${}^{u}$}\put(42,83){${}^{d}$}\put(60.5,94.5){${}^{v}$}
\put(37,-5){${}^{G}$}\end{picture}}
\put(80,50){\special{em:linewidth 0.4pt}
\unitlength 0.25ex \linethickness{0.4pt}\begin{picture}(80,80)
\put(0,-40){\path(40,90)(40,50)(22,20)(58,20)(40,50)
\multiput(40,50)(0,40){2}{\circle*{2.5}}\multiput(22,20)(36,0){2}{\circle*{2.5}}
\put(19.5,11){${}^{a}$}\put(56.5,10){${}^{b}$}\put(42.5,48){${}^{c}$}\put(42.5,85){${}^{d}$}
\put(37,-5){${}^{H}$}\put(55,80){${}^{\overline{f}(H[a])=1}$}
\put(55,70){${}^{\overline{f}(H[b])=2}$}\put(55,60){${}^{\overline{f}(H[d])=2}$}
\put(55,50){${}^{\overline{f}(H[c,d])=1}$}\put(55,40){${}^{\overline{f}(H[a,b,c])=1}$}} \end{picture}}
\put(180,10){\special{em:linewidth 0.4pt}
\unitlength 0.25ex \linethickness{0.4pt}\begin{picture}(80,80)
\path(5,10)(40,30)(75,30)(58,0)(40,30)(40,70)(22,100)(58,100)(40,70)
\multiput(22,100)(36,0){2}{\whiten\circle{2.5}}\multiput(40,30)(0,40){2}{\whiten\circle{2.5}}
\put(5,10){\whiten\circle{2.5}}\put(58,0){\whiten\circle{2.5}}\put(75,30){\whiten\circle{2.5}}
\put(17,94.5){${}^{t}$}\put(60.5,94.5){${}^{v}$}\put(42,65){${}^{u}$}
\put(42,29.5){${}^{w}$}\put(60.5,-5){${}^{z}$}\put(3,1){${}^{x}$}\put(73,31){${}^{y}$}
\put(37,-5){${}^{F}$}\put(60,80){${}^{\overline{g}(F[x,w])=1}$}
\put(60,70){${}^{\overline{g}(F[u,w])=1}$}\put(60,60){${}^{\overline{g}(F[t,v,u])=1}$}
\put(60,50){${}^{\overline{g}(F[w,y,z])=1}$}\end{picture}}
\end{picture} \caption{Graph $G$ is the bipartization of the two non-isomorphic graphs $H$ and $F$.} \label{rys-grafu-bipatrization-dwa+} \end{center}
\end{figure*}
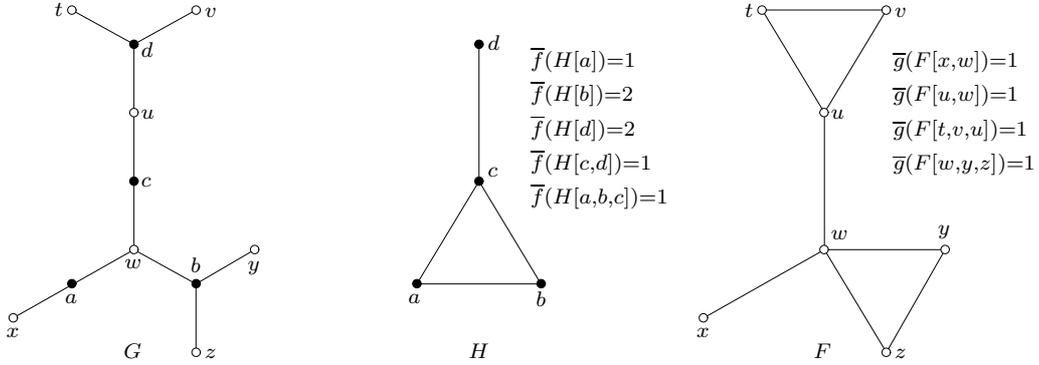

It is obvious from Theorem \ref{thm:bip_existence} that every tree is a bipartization.
We are now interested in providing a simple characterization of graphs $H$ and functions $f\colon {\cal K}_H \to \mathbb{N}$ for which the bipartization $B_f(H)$ is a tree.
We begin with the following notation: An alternating sequence of vertices and complete graphs $(v_0,F_1,v_1, \ldots, v_{k-1},F_k,v_k)$  is said to be a~{\it positively $f$-valued complete $v_0-v_k$ path} if $v_{i-1}v_i$ is an edge in the complete graph $F_i$ for $i=1,\ldots, k$. We now have the following two useful lemmas.

\begin{lem}\label{lem:tree}
Let $H$ be a connected graph, and let $f \colon {\cal{K}}_H \to \mathbb{N}$ be a function. If there are two vertices $u$ and $v$ and two distinct internally vertex-disjoint positively $f$-valued complete $u-v$ paths in $H$,  then the bipartization $B_f(H)$ contains a cycle. \end{lem}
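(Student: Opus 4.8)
The plan is to lift each of the two given positively $f$-valued complete $u$-$v$ paths in $H$ to an honest $u$-$v$ path in the bipartite graph $B_f(H)$, and then to apply the classical fact that two distinct paths joining the same pair of vertices force a cycle in their union: the symmetric difference of their edge sets is a nonempty subgraph in which every vertex has even degree, and such a subgraph contains a cycle.

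Concretely I would proceed as follows. First, write the two paths as $P = (v_0, F_1, v_1, \ldots, v_{k-1}, F_k, v_k)$ and $Q = (w_0, G_1, w_1, \ldots, w_{m-1}, G_m, w_m)$ with $v_0 = w_0 = u$ and $v_k = w_m = v$, and record what "$P$ is a path" buys us: the $v_i$ are pairwise distinct, likewise the $w_j$, the complete graphs $F_1, \ldots, F_k$ are pairwise distinct, likewise $G_1, \ldots, G_m$, and (a path that uses at least one complete graph cannot return to its starting vertex) $u \ne v$ and $k, m \ge 1$. Second, associate with $P$ the sequence $\widehat P = (v_0, (F_1,1), v_1, (F_2,1), \ldots, v_{k-1}, (F_k,1), v_k)$ of vertices of $B_f(H)$, and similarly $\widehat Q$; since $f(F_i) \ge 1$ we have $(F_i,1) \in B$, and since $v_{i-1} v_i \in E_{F_i}$ we have $v_{i-1}, v_i \in V_{F_i}$, so by Proposition~\ref{Proposition-first} (part $(1)(\mathrm c)$) both $v_{i-1}$ and $v_i$ are adjacent to $(F_i,1)$; hence $\widehat P$ is a walk from $u$ to $v$, and its vertices are pairwise distinct — the $v_i$ because $P$ is a path, the $(F_i,1)$ because the $F_i$ are distinct, and an $A$-vertex is never a $B$-vertex — so $\widehat P$ and $\widehat Q$ are genuine $u$-$v$ paths in $B_f(H)$. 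Third, note $\widehat P \ne \widehat Q$, because reading off the complete graphs and the vertices of a path of $B_f(H)$ shows that $\widehat P = \widehat Q$ would force $P = Q$; since a path is determined by its edge set, it follows that $E_{\widehat P} \ne E_{\widehat Q}$, so the spanning subgraph $S$ of $B_f(H)$ with edge set $E_{\widehat P} \triangle E_{\widehat Q}$ has at least one edge. Fourth, every vertex of $S$ has even degree: for any vertex $z$ the number of edges of $S$ at $z$ has the same parity as $\deg_{\widehat P}(z) + \deg_{\widehat Q}(z)$, and along a $u$-$v$ path every vertex has degree $0$ or $2$ except $u$ and $v$, which have degree $1$; so this sum is even for every $z$. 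Finally, a graph with an edge and no odd-degree vertex is not a forest (a nontrivial forest has a leaf), hence $S$ contains a cycle, and that cycle is a cycle of $B_f(H)$.

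The only delicate point I anticipate is the bookkeeping that upgrades $\widehat P$ from a walk to a path, i.e. extracting from "$P$ is a path" the distinctness of the complete graphs $F_1, \ldots, F_k$ (needed so that $(F_1,1), \ldots, (F_k,1)$ are distinct vertices of $\widehat P$). If one reads the definition of a positively $f$-valued complete path so weakly that a complete graph may recur, one must first shortcut $P$: whenever $F_i = F_j$ with $i<j$ one has $v_{i-1}, v_j \in V_{F_i}$, so deleting $v_i, F_{i+1}, \ldots, v_{j-1}, F_j$ and keeping $\ldots, v_{i-1}, F_i, v_j, \ldots$ yields again a positively $f$-valued complete $u$-$v$ path; one must then separately dispose of the degenerate case in which $P$ and $Q$ shortcut to the same path, where necessarily one complete graph $K$ carries $f(K)\ge 1$ and has two distinct vertices lying on a shorter gadget, producing a $4$-cycle in $B_f(H)$ outright. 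Everything else is the routine two-paths-give-a-cycle argument; in particular the even-subgraph step never uses internal vertex-disjointness of $P$ and $Q$, which is needed (if at all) only to make the notion of "path" behave as expected.
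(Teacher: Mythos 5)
Your main argument is correct and is exactly the paper's proof in expanded form: the paper also lifts each path $(v_0,F_1,v_1,\ldots,F_k,v_k)$ to $(v_0,(F_1,1),v_1,\ldots,(F_k,1),v_k)$ and concludes that two distinct $u$--$v$ paths in $B_f(H)$ generate a cycle, merely omitting the bookkeeping (walk versus path, symmetric difference of edge sets) that you supply. One caveat about your closing aside: under the ``weak reading'' in which a complete graph may recur along a path, your proposed patch fails --- the degenerate case does \emph{not} always produce a $4$-cycle (take $H=K_3$ on $\{u,w,v\}$ with $f$ positive only on $H$ itself, and the paths $(u,K_3,v)$ and $(u,K_3,w,K_3,v)$; then $B_f(H)=K_{1,3}$ is a tree), so under that reading the lemma itself is false; the distinctness of $F_1,\ldots,F_k$ must be taken as part of the definition of a path, which is what the paper's proof implicitly assumes and what your main argument correctly uses.
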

\begin{proof} If $(v_0=u, F_1,v_1,\ldots, v_{m-1},F_m,v_m=v)$ and $(v_0'=u, F_1', v_1',\ldots, v_{n-1}',F_n',v_n'=v)$ are distinct internally vertex-disjoint positively $f$-valued complete $u-v$ paths in $H$, then $(v_0, (F_1,1),v_1,\ldots, v_{m-1},(F_m,1), v_m)$ and $(v_0', (F_1',1),v_1',\ldots, v_{n-1}',(F_n',1),v_n')$ are distinct $u-v$ paths in $B_f(H)$, and so they generate at least one cycle in $B_f(H)$. \end{proof}

Let us recall first that a maximal connected subgraph without a cutvertex is called a~{\em block}. A graph $H$ is said to be a {\em block graph} if each block of $H$ is a complete graph. The next lemma is probably known, therefore we omit its easy inductive proof.

\begin{lem}\label{lem:block-graphs} If ${\cal S}$ is the set of all blocks of a graph $H$, then $\sum\limits_{B\in {\cal S}}\left(|V_B|-1\right)= |V_H|-1$. \end{lem}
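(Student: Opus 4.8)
The plan is to argue by induction on the number $|{\cal S}|$ of blocks of $H$, assuming (as the surrounding context dictates) that $H$ is connected; for a disconnected $H$ the same argument applied componentwise gives $\sum_{B\in{\cal S}}(|V_B|-1)=|V_H|-c(H)$, where $c(H)$ is the number of components. If $|{\cal S}|=1$, then $H$ has no cutvertex, so $H$ is its own unique block and the identity is just $|V_H|-1=|V_H|-1$.

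For the inductive step, suppose $|{\cal S}|=k\ge 2$ and the claim holds for connected graphs with fewer than $k$ blocks. Since $H$ has more than one block it has a cutvertex, and hence, by the standard structure of the block--cutvertex tree of $H$, it has an \emph{end block} $B_0\in{\cal S}$ containing exactly one cutvertex $c$ of $H$; every vertex of $V_{B_0}\setminus\{c\}$ has all of its neighbours inside $B_0$. Let $H'=H-(V_{B_0}\setminus\{c\})$. Then $H'$ is connected (a path in $H$ joining two vertices of $H'$ and meeting $V_{B_0}\setminus\{c\}$ must pass through $c$, so it can be rerouted so as to stay in $H'$), the set of blocks of $H'$ is exactly ${\cal S}\setminus\{B_0\}$, and $|V_{H'}|=|V_H|-(|V_{B_0}|-1)$. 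Applying the induction hypothesis to $H'$,
$$\sum_{B\in{\cal S},\,B\ne B_0}\bigl(|V_B|-1\bigr)=|V_{H'}|-1=|V_H|-(|V_{B_0}|-1)-1,$$
and adding $|V_{B_0}|-1$ to both sides yields $\sum_{B\in{\cal S}}(|V_B|-1)=|V_H|-1$, completing the induction.

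The only delicate point is the claim that the blocks of $H'$ are precisely ${\cal S}\setminus\{B_0\}$, i.e.\ that deleting the ``private'' vertices of the end block $B_0$ neither destroys, creates, nor merges any of the other blocks. This follows from the basic facts that two distinct blocks share at most one vertex (necessarily a cutvertex), that every edge of $H$ lies in a unique block, and that the block--cutvertex graph is a tree; one should either invoke these directly or spell out the short verification that deleting $V_{B_0}\setminus\{c\}$ leaves the incidence pattern of the remaining blocks unchanged.

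An essentially equivalent alternative, which sidesteps the end-block bookkeeping, is to fix a spanning tree $T_B$ of each block $B\in{\cal S}$ (so $|E_{T_B}|=|V_B|-1$). Since distinct blocks are edge-disjoint and every cycle of $H$ is contained in a single block, the union $\bigcup_{B\in{\cal S}}T_B$ is a connected, acyclic spanning subgraph of $H$, hence a spanning tree; comparing edge counts gives $\sum_{B\in{\cal S}}(|V_B|-1)=\sum_{B\in{\cal S}}|E_{T_B}|=|E_{\bigcup T_B}|=|V_H|-1$. I expect either route to be routine once the standard block-decomposition facts are granted, which is presumably why the paper omits the details.
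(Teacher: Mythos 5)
Your inductive argument (delete the non-cutvertex part of an end block, apply the induction hypothesis, and account for the $|V_{B_0}|-1$ removed vertices) is correct and is exactly the ``easy inductive proof'' the paper alludes to and omits; the spanning-tree alternative is also sound. You are right to flag that the identity requires $H$ connected --- the paper's statement omits this hypothesis, though it is satisfied where the lemma is applied.
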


Now we are ready for a characterization of graphs which bipartizations
(with respect to some functions) are trees.

\begin{thm}\label{thm:bip_tree} Let $H$ be a connected graph, and let $f \colon {\cal{K}}_H \to \mathbb{N}$ be a~function such that every edge of $H$ belongs to some
positively $f$-valued complete subgraph of $H$. Then the bipartization $B_f(H)$ is a tree if and only if the following conditions hold:
\begin{itemize}
\item[$(1)$] $f(K) \le 1$ for every non-trivial complete subgraph $K$ of $H$.
\item[$(2)$] $H$ is a block graph.
\item[$(3)$] For a non-trivial complete subgraph $K$ of $H$ is $f(K)=1$ if and
only if $K$ is a~block of $H$.
\end{itemize}
\end{thm}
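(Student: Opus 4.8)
The plan is to prove the two implications separately, using four ingredients already at hand: the vertex- and edge-count formulas of Proposition~\ref{Proposition-first}(1)(d), the connectivity criterion of Proposition~\ref{Proposition-first}(2), the cycle-producing Lemma~\ref{lem:tree}, and the block identity $\sum_{B\in{\cal S}}(|V_B|-1)=|V_H|-1$ of Lemma~\ref{lem:block-graphs}. I will also repeatedly use the elementary observation that every complete subgraph of $H$ with at least two vertices is contained in a single block of $H$ (it is $K_2$ or $2$-connected); in particular every positively $f$-valued complete subgraph that covers a given edge lies inside the block of that edge. The case $|V_H|=1$ is trivial --- then $B_f(H)$ is a star, hence a tree, and conditions $(1)$--$(3)$ hold vacuously --- so I assume $|V_H|\ge 2$, whence every block of $H$ is non-trivial.

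For the ``if'' direction I assume $(1)$--$(3)$. By Proposition~\ref{Proposition-first}(2) the graph $B_f(H)$ is connected, since $H$ is connected and the standing hypothesis puts every edge of $H$ into a positively $f$-valued complete subgraph; so it suffices to check $|E_{B_f(H)}|=|V_{B_f(H)}|-1$. Conditions $(1)$ and $(3)$ force $f(K)=1$ for every block $K$ and $f(K)=0$ for every non-trivial non-block $K$, while each trivial subgraph $H[v]$ merely adds $f(H[v])$ pendant vertices. Writing $P=\sum_{v\in V_H}f(H[v])$ and letting ${\cal S}$ be the set of all blocks of $H$, Proposition~\ref{Proposition-first}(1)(d) gives $|V_{B_f(H)}|=|V_H|+P+|{\cal S}|$ and $|E_{B_f(H)}|=P+\sum_{B\in{\cal S}}|V_B|$, so $|E_{B_f(H)}|-|V_{B_f(H)}|+1=\sum_{B\in{\cal S}}(|V_B|-1)-(|V_H|-1)$, which vanishes by Lemma~\ref{lem:block-graphs}. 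Hence $B_f(H)$ is connected with $|V_{B_f(H)}|-1$ edges, i.e.\ a tree.

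For the ``only if'' direction I assume $B_f(H)$ is a tree. Condition $(1)$ is immediate: if $f(K)\ge2$ for a non-trivial $K$, then $u,(K,1),v,(K,2)$ is a $4$-cycle for any $u,v\in V_K$. For $(2)$, if $H$ were not a block graph it would have a non-complete ($2$-connected) block, hence two non-adjacent vertices $u,v$ joined by two internally vertex-disjoint paths in $H$ (Menger); replacing each edge of these paths by a positively $f$-valued complete subgraph containing it produces two distinct internally vertex-disjoint positively $f$-valued complete $u-v$ paths, so Lemma~\ref{lem:tree} gives a cycle, a contradiction. For $(3)$, granting $(1)$ and $(2)$: if a non-trivial $K$ with $f(K)=1$ were not a block, it would be a proper clique of some block $B$ with $|V_B|\ge3$; picking an edge $uv$ of $K$, a vertex $w\in V_B\setminus V_K$ (adjacent to $u$ and $v$ since $B$ is complete), and positively $f$-valued cliques covering $wu$ and $wv$, I play these against the path $(u,K,v)$ to obtain two distinct internally vertex-disjoint positively $f$-valued complete $u-v$ paths and again invoke Lemma~\ref{lem:tree}. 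Conversely, if a non-trivial block $B$ had $f(B)=0$, then $|V_B|\ge3$ (the single edge of a $K_2$-block lies only in the clique $B$, forcing $f(B)\ge1$), and the same device --- an edge $uv$ of $B$ covered by a proper clique $K_1$, a vertex $w\in V_B\setminus V_{K_1}$, and cliques covering $wu$ and $wv$ --- yields a cycle; so $f(B)=1$ by $(1)$.

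I expect the main obstacle to be precisely this ``only if'' part: turning each structural defect (a non-complete block, a value $f(K)=1$ on a non-block, or $f(B)=0$ on a block) into a genuine pair of distinct internally vertex-disjoint positively $f$-valued complete paths between two chosen vertices, so that Lemma~\ref{lem:tree} applies. The care is in choosing $u,v,w$ inside the relevant block, using block-graph completeness to make $w$ adjacent to both $u$ and $v$, and checking in each sub-case --- including the degenerate one where the two covering cliques of $wu$ and $wv$ coincide --- that the two resulting complete paths really are distinct and internally vertex-disjoint. The ``if'' direction, by contrast, should reduce cleanly to the edge count and Lemma~\ref{lem:block-graphs}.
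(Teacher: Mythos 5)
Your proof is correct, and its overall architecture coincides with the paper's: the forward direction is reduced, condition by condition, to producing two distinct internally vertex-disjoint positively $f$-valued complete paths so that Lemma~\ref{lem:tree} yields a cycle, and the backward direction combines connectivity with the count $|E_{B_f(H)}|=|V_{B_f(H)}|-1$ via Lemma~\ref{lem:block-graphs}. The one genuine divergence is your proof of condition $(2)$: the paper takes a shortest cycle of the non-complete block with a missing chord and runs a case analysis on its chords (reducing to a chordless cycle or to a $C_4$ with exactly one chord), whereas you invoke $2$-connectedness of the block and Menger's theorem to get two internally vertex-disjoint paths between two non-adjacent vertices directly; your route is shorter and avoids the case analysis, at the cost of importing Menger. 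Two smaller differences: for the ``block $\Rightarrow f=1$'' half of $(3)$ the paper does not argue via a cycle at all --- it only shows $f(B')=0$ for proper non-trivial subcliques $B'$ of a block $B$ and then lets the standing edge-covering hypothesis force $f(B)\ge 1$ --- while you give an explicit (and logically redundant, but harmless) second cycle construction; and in the ``if'' direction the paper normalizes $f(H[v])=0$ by a without-loss-of-generality step, while you carry the pendant count $P=\sum_v f(H[v])$ through the computation and let it cancel, which is slightly cleaner. Your flagged worry about coinciding covering cliques is legitimate but is resolved exactly as you suggest (shortcut through the common clique), and the paper glosses over the same point when it applies Lemma~\ref{lem:tree}.
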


\begin{proof} Assume that $B_f(H)$ is a tree. The statement (1) is obvious, for if there
were a~non-trivial complete subgraph $K$ of $H$ for which $f(K)\ge 2$, then for any
two vertices $u$ and $v$ belonging to $K$, the sequence $(u,(K,1),v,(K,2),u)$ would
be a cycle in $B_f(H)$.

Suppose now that $H$ is not a block graph. Then there exists a~block in $H$, say $B$, which is not a complete graph. Thus in $B$ there exists a~cycle such that not all its chords belong to $B$. Let $C=(v_0,v_1,\ldots, v_l,v_0)$ be a~shortest such cycle in $B$. Then $l\ge 3$ and we distinguish two cases. If $C$ is chordless, then, by Lemma \ref{lem:tree}, $B_f(H)$ contains a~cycle. Thus assume that $C$ has a chord. We may assume that $v_0$ is an end-vertex of a chord of $C$, and then let $k$ be the smallest integer such that $v_0v_k$ is a chord of $C$. Now the choice of $C$ implies that the vertices $v_0, v_1, \ldots, v_k$ are mutually adjacent, and therefore, $k=2$. Similarly, $v_0, v_k, \ldots, v_l$ are mutually adjacent, and so we must have $l=3$. Consequently, $C=(v_0,v_1,v_2,v_3,v_0)$ and $v_0v_2$ is the only chord of $C$. Now it is obvious that there are at least two $v_0-v_2$ positively $f$-valued complete paths in $H$. From this and from Lemma \ref{lem:tree} it follows that the bipartition $B_f(H)$ contains a cycle. This contradiction completes the proof of the statement~(2).

Let $B$ be a block of $H$. We have already proved that $B$ is a complete graph. Let $B'$ be a proper non-trivial complete subgraph of $B$. To prove (3), it suffices to observe that $f(B')=0$. On the contrary, suppose that $f(B')\not=0$. We now choose two distinct vertices $v$ and $u$ belonging to $B'$, and a vertex $w$ belonging to $B$ but not to $B'$. This clearly forces that there are at least two $v-u$ positively $f$-valued complete paths in $H$. Consequently, by Lemma \ref{lem:tree}, $B_f(H)$ contains a~cycle, and this contradiction completes the proof of the statement~(3).

Assume now that the conditions (1)--(3) are satisfied for $H$ and $f$. Since end-vertices of $B_f(H)$, corresponding to positively $f$-valued one-vertex complete subgraphs of $H$, are not important to our study of tree-like structure of $B_f(H)$, we can assume without loss of generality that $f(H[v])=0$ for every vertex $v \in V_H$. Consequently, $H$ is a~block graph and $f(K)=1$ for every block $K$ of $H$, while $f(K')=0$ for every other complete subgraph $K'$ of $H$. It remains to prove that $B_f(H)$ is a tree. Since $B_f(H)$ is a connected graph, it suffices to show that $|E_{B_f(H)}| = |V_{B_f(H)}|-1$. Let ${\cal S}$ be the set of all blocks of $H$. Then ${\cal K}_H^f={\cal S}$,  $|V_{B_f(H)}|= |V_H|+ \sum_{K\in {\cal K}_H^f}\!f(K)=  |V_H|+|{\cal S}|$, and $|E_{B_f(H)}|=  \sum_{K\in {\cal K}_H^f}\!f(K)|V_K|= \sum_{K\in {\cal S}}|V_K|= \sum_{K\in {\cal S}}(|V_K|-1)+|{\cal S}|$. Now, since $\sum_{K\in {\cal S}}(|V_K|-1) =|V_H|-1$ (by Lemma \ref{lem:block-graphs}), we finally have $|E_{B_f(H)}| = (|V_{H}|-1)+|{\cal S}|= (|V_{H}|+|{\cal S}|)-1= |V_{B_f(H)}|-1$. \end{proof}

\begin{cor}
For every connected graph $H$, there exists a function $f \colon {\cal K}_{H} \to \mathbb{N}$ such that the bipartization $B_f(H)$ is a tree.
\end{cor}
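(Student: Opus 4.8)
The plan is to \emph{not} apply Theorem~\ref{thm:bip_tree} to $H$ itself --- doing so would force $H$ to be a block graph, which a general connected graph is not --- but to pass to a spanning tree first. If $H$ has a single vertex, the claim is trivial (take $f\equiv 0$, so that $B_f(H)$ is a one-vertex tree), so assume $|V_H|\ge 2$ and fix a spanning tree $T$ of $H$. Since $T$ is a subgraph of $H$ with $V_T=V_H$ and $T$ contains no triangle, every complete subgraph of $T$ is either a one-vertex graph or an edge of $T$; in either case it is also a complete subgraph of $H$, so ${\cal K}_T\subseteq {\cal K}_H$.

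Next I would define $f\colon {\cal K}_H\to\mathbb{N}$ by $f(K)=1$ if $K$ is an edge of $T$, and $f(K)=0$ for every other complete subgraph $K$ of $H$ (in particular $f(H[v])=0$ for each $v\in V_H$, and $f$ vanishes on every edge of $H$ not in $T$ as well as on every clique of $H$ with at least three vertices). Then ${\cal K}_H^f$ is exactly the set of blocks of $T$, so $B_f(H)$ has $A=V_H=V_T$ and $B=\bigcup_{K\in{\cal K}_H}{\cal F}_K=\{(K,1)\colon K\text{ is an edge of }T\}$, with each $(K,1)$, $K=H[u,v]$, adjacent precisely to $u$ and $v$. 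Hence $B_f(H)$ is obtained from $T$ by inserting one new vertex into each edge of $T$; that is, $B_f(H)$ is the subdivision graph $S(T)$ of $T$ (cf.\ Example~\ref{przyklad-1}), which is a tree. Equivalently, one may restrict $f$ to ${\cal K}_T$ and invoke Theorem~\ref{thm:bip_tree} for the connected block graph $T$, whose blocks are exactly its edges: conditions~(1)--(3) hold by construction, so the corresponding bipartization is a tree, and it coincides with $B_f(H)$ because the latter depends, beyond $V_H$, only on the complete subgraphs on which $f$ is positive.

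There is no genuinely hard step here. The one point worth stating explicitly is precisely that last observation: once $f$ is supported on the edges of a spanning tree, the edges of $H$ outside $T$ and the larger cliques of $H$ play no role in $B_f(H)$, so none of the cycles of $H$ can survive into the bipartization. The corollary is thus little more than the remark that every tree is its own skeleton, combined with Theorem~\ref{thm:bip_tree} (or with the elementary fact that subdivisions of trees are trees).
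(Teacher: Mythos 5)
Your proof is correct and follows essentially the same route as the paper: the paper takes a spanning block graph $F$ of $H$ and sets $f(K)=1$ exactly when $K$ is a block of $F$, of which your spanning tree $T$ (whose blocks are its edges) is the simplest instance. Your explicit remarks --- that Theorem~\ref{thm:bip_tree} must be invoked for the spanning subgraph rather than for $H$ itself, and that $B_f(H)$ depends only on the cliques where $f$ is positive (so that $B_f(H)=S(T)$ is plainly a tree) --- are points the paper leaves implicit, and they are handled correctly.
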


\begin{proof}
Let $F$ be a spanning block graph of $H$ and let $f\colon {\cal K}_F \rightarrow \{0,1\}$ be a function such that $f(K)=1$ if and only if $K$ is a block of $F$. Clearly, $f$ satisfies the conditions (1)--(3) of Theorem \ref{thm:bip_tree}, and so the bipartization $B_f(H)$ is a tree. \end{proof}

\begin{example} \label{przyklad-3} {\rm Fig.~\ref{rys-grafu-bipatrization-dwa+} shows the tree $G$ which is the bipartization of two block graphs $H$ and $F$ with respect to functions $\overline{f}$ and $\overline{g}$, respectively, which non-zero values are listed  in the same figure.}\end{example}

\section{Graphs belonging to the family ${\cal B}$}\label{sec:dom}
In this section, we provide an alternative characterization of all bipartite graphs whose domination number is equal to the cardinality of its smaller partite set, that is, we prove that a graph $G$ belongs to the class ${\cal B}$ if and only if $G$ is some bipartization of a~graph. For that purpose, we need the following lemma.

\begin{lem}\label{thm:main2}{\em\cite{MTZxx}}
 Let $G=((A,B),E_G)$ be a connected bipartite graph with $1\le |A|\le |B|$. Then the following statements are equivalent:
\begin{itemize}
\item[$(1)$] $\gamma(G)=|A|$.
\item[$(2)$] $\gamma(G)=\beta(G)=|A|$.
\item[$(3)$] $G$ has the following two properties:
\begin{itemize}
\item[{\rm (a)}] Each support vertex of $G$ belonging to $B$ is a weak support and each of its non-leaf neighbors is a support.
\item[{\rm (b)}]
If $x$ and $y$ are vertices belonging to $A-(L_G\cup S_G)$ and $d_G(x,y)=2$, then there are at least two vertices $\overline{x}$ and $\overline{y}$ in $B$ such that $N_G(\overline{x})= N_G(\overline{y})= \{x,y\}$.
\end{itemize}
\end{itemize} \end{lem}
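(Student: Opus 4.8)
The plan is to prove $(1)\Leftrightarrow(2)$ directly and then to close the circle by establishing $(3)\Rightarrow(1)$ together with the contrapositive $\neg(3)\Rightarrow\gamma(G)<|A|$. The equivalence $(1)\Leftrightarrow(2)$ is immediate: since $G$ is connected with at least two vertices it has no isolated vertex, so every minimum covering set is a dominating set and hence $\gamma(G)\le\beta(G)$; moreover $A$ is a covering set, so $\beta(G)\le|A|$. Thus $\gamma(G)\le\beta(G)\le|A|$, so $\gamma(G)=|A|$ forces $\gamma(G)=\beta(G)=|A|$, while the reverse implication is trivial.

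For $\neg(3)\Rightarrow\gamma(G)<|A|$ I would exhibit, in each way $(3)$ can fail, a dominating set of $G$ with $|A|-1$ elements. If a support $s\in B$ has two leaf neighbours $\ell,\ell'\in A$, delete $\ell'$; the connected bipartite graph $G-\ell'$ has a smaller part of size $|A|-1\le|B|$, so it has a dominating set of size at most $|A|-1$, and since $\ell$ is still a leaf with support $s$ there, that set either already contains $s$ (hence dominates $G$) or can have $\ell$ replaced by $s$, giving a dominating set of $G$ of size at most $|A|-1$. If a support $s\in B$ has a non-leaf neighbour $t\in A$ that is not a support, pick a leaf $\ell\in A$ of $s$ and take $D=(A\setminus\{t,\ell\})\cup\{s\}$; this dominates $G$, because a vertex $b\ne s$ of $B$ failing to be dominated would satisfy $N_G(b)\subseteq\{t,\ell\}$ and hence $N_G(b)=\{t\}$, making $t$ a support. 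If $(3b)$ fails, there are $x,y\in A-(L_G\cup S_G)$ with $d_G(x,y)=2$ such that at most one vertex of $B$ has neighbourhood exactly $\{x,y\}$; choosing $w$ to be that vertex when it exists and any common neighbour of $x$ and $y$ otherwise, the set $D=(A\setminus\{x,y\})\cup\{w\}$ dominates $G$, because a vertex $b\ne w$ of $B$ with $N_G(b)\subseteq\{x,y\}$ would have $N_G(b)=\{x,y\}$ (smaller neighbourhoods are ruled out since $x$ and $y$ are not supports), contradicting the choice of $w$.

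For $(3)\Rightarrow(1)$ it suffices to prove $\gamma(G)\ge|A|$, since $\gamma(G)\le|A|$ always holds; the case $G=K_2$ is trivial, so assume $L_G\cap S_G=\emptyset$. A routine exchange argument shows one may choose a minimum dominating set $D$ with $S_G\subseteq D$ and $L_G\cap D=\emptyset$. Partition $A=A_L\cup A_S\cup A_0$ and $B=B_L\cup B_S\cup B_0$ into leaves, supports, and the other vertices of each part. Condition $(3a)$ gives a bijection from $B_S$ to $A_L$ (each weak support in $B$ to its unique leaf), so $|B_S|=|A_L|$, and it gives $N_G(A_0)\subseteq B_0$ (a neighbour of a vertex of $A_0$ is not a leaf, else that vertex would be a support, and not a support in $B$, else that vertex would be one of its non-leaf neighbours, hence again a support). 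Since $A_S\cup B_S\subseteq D$ and $D$ has no leaf, $|D|=|A_S|+|B_S|+|D\cap(A_0\cup B_0)|$, so it remains to prove $|D\cap B_0|\ge|A_0\setminus D|$. Consider the bipartite graph $\Gamma$ with parts $R=A_0\setminus D$ and $S=D\cap B_0$ and the edges of $G$ between them; every vertex of $R$ has a neighbour in $S$ (it is dominated by $D$ and all its neighbours lie in $B_0$). I claim $\Gamma$ satisfies Hall's condition on $R$, whence $|S|\ge|R|$ and $|D|\ge|A_S|+|B_S|+|A_0|=|A_S|+|A_L|+|A_0|=|A|$, as desired. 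If Hall's condition failed, take a minimal $X\subseteq R$ with $|N_\Gamma(X)|<|X|$; minimality forces $|X|\ge 2$ and forces each vertex of $N_\Gamma(X)$ to have at least two neighbours in $X$. Picking a vertex of $N_\Gamma(X)$ and two of its neighbours $x_1,x_2\in X$, we get $d_G(x_1,x_2)=2$, so $(3b)$ supplies two vertices $\overline{x_1},\overline{x_2}$ of $B$ with $N_G(\overline{x_i})=\{x_1,x_2\}$; these lie in $B_0$, and since their only neighbours $x_1,x_2$ are not in $D$ they must lie in $D$ and hence in $S$, and in $\Gamma$ they are adjacent only to $x_1$ and $x_2$. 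Comparing $X$ with $X\setminus\{x_1,x_2\}$ then yields $|N_\Gamma(X)|\ge|N_\Gamma(X\setminus\{x_1,x_2\})|+2\ge(|X|-2)+2=|X|$, a contradiction.

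The main obstacle is this Hall-type argument in the direction $(3)\Rightarrow(1)$: it is precisely here that $(3b)$ is used, in the form that two ``core'' vertices at distance two possess two private common neighbours of degree two, which are forced into any dominating set that avoids both of those core vertices. The remaining ingredients — the normalisation of the minimum dominating set, the equality $|B_S|=|A_L|$, the inclusion $N_G(A_0)\subseteq B_0$, and the trivial case $G=K_2$ — need only routine verification.
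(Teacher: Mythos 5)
The paper does not prove this lemma at all --- it is imported verbatim from the cited manuscript [MTZxx] --- so there is no in-paper argument to compare yours against. On its own merits, your proof is correct and self-contained. The cycle of implications is complete: $(1)\Leftrightarrow(2)$ follows from $\gamma\le\beta\le|A|$ as you say; each of the three ways $(3)$ can fail does yield an explicit dominating set of size $|A|-1$ (in the two-leaves case the replacement of $\ell$ by $s$ in a dominating set of $G-\ell'$ is legitimate because $\ell$'s closed neighbourhood is $\{\ell,s\}$); and the direction $(3)\Rightarrow(1)$ goes through. I checked the delicate points of the Hall-type argument: the normalised minimum dominating set with $S_G\subseteq D$, $L_G\cap D=\emptyset$ exists because $L_G\cap S_G=\emptyset$ once $G\ne K_2$; condition (3a) really does give both $|B_S|=|A_L|$ and $N_G(A_0)\subseteq B_0$; in a minimal Hall-violating set $X$ every vertex of $N_\Gamma(X)$ has two neighbours in $X$, so (3b) produces two degree-two vertices $\overline{x_1},\overline{x_2}$ whose only neighbours $x_1,x_2$ avoid $D$, forcing $\overline{x_1},\overline{x_2}\in D\cap B_0$ and contributing the ``$+2$'' that kills the violation. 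This is the same general strategy (counting a normalised minimum dominating set against the leaf/support/core partition) used in the related characterizations of Hartnell--Rall and Randerath--Volkmann that the paper builds on, though your use of Hall's theorem on the core is a clean way to package the final inequality. No gaps found.
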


We are ready to establish our main theorem that provides an alternative characterization of the graphs belonging to ${\cal B}$ in terms of the bipartization of a graph.

\begin{thm}\label{thm:dom_main2} Let $G=((A,B),E_G)$ be a connected bipartite graph with 
$1\le |A| \le |B|$. Then $\gamma(G) =|A|$ if and only if $G$ is the bipartization $B_f(H)$ of a connected graph $H$ with respect to a non-zero function $f\colon {\cal K}_H \to \mathbb{N}$ and $f$ has the following two properties: \begin{itemize} \item[$(1)$] If $uv\in E_H$ and $f(H[u,v]) =0$, then $f(H')>0$ for some complete subgraph $H'$ of $H$ containing the edge $uv$. \item[$(2)$] If $uv\in E_H$ and $f(H[u])=f(H[v])=0$, then $f(H[u,v])\ge 2$. \end{itemize}
\end{thm}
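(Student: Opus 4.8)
The plan is to prove both implications by combining the structural description of graphs in $\mathcal B$ given by Lemma~\ref{thm:main2} with the bipartization machinery of Theorem~\ref{thm:bip_existence}. For the forward direction, I would start from a connected bipartite $G=((A,B),E_G)$ with $\gamma(G)=|A|\le|B|$, and take the graph $H$ and function $f$ produced by the proof of Theorem~\ref{thm:bip_existence}: namely $V_H=A$, with $xy\in E_H$ iff $d_G(x,y)=2$, and $f(K)=|\{b\in B\colon N_G(b)=V_K\}|$. Then $G=B_f(H)$, and $f$ is non-zero since $B\neq\emptyset$; $H$ is connected because $G$ is connected (any two vertices of $A$ are joined in $G$ by a path that alternates between $A$ and $B$, which projects to an $H$-walk). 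The remaining work is to verify conditions (1) and (2), and this is where Lemma~\ref{thm:main2} enters. For condition (1): if $uv\in E_H$ then $d_G(u,v)=2$, so there is $b\in B$ with $u,v\in N_G(b)$; the set $V_K:=N_G(b)$ is a clique in $H$ containing the edge $uv$ and has $f(K)\geq 1$, so take $H'=H[N_G(b)]$. For condition (2): suppose $uv\in E_H$ with $f(H[u])=f(H[v])=0$, i.e.\ no vertex of $B$ has neighborhood exactly $\{u\}$ or exactly $\{v\}$; I must show at least two vertices of $B$ have neighborhood exactly $\{u,v\}$. The hypothesis $f(H[u])=0$ means $u$ is not a support vertex of $G$ whose only leaf hangs off it\,--\,more precisely $u$ has no leaf-neighbor in $G$\,--\,and similarly for $v$; one then checks that $u,v\in A-(L_G\cup S_G)$ (using that $A$ vertices are never leaves of a connected bipartite graph with $|B|\ge|A|\geq 1$ unless $G=K_2$, a case handled separately, and the property~(a) of Lemma~\ref{thm:main2}(3) to rule out $u,v$ being supports), and then property~(b) of Lemma~\ref{thm:main2}(3) applied to the pair $u,v$ at distance $2$ yields the two required vertices $\overline u,\overline v\in B$, i.e.\ $f(H[u,v])\ge 2$.

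For the converse, I would assume $G=B_f(H)$ for a connected $H$ and non-zero $f$ satisfying (1) and (2), and verify that $G$ satisfies Lemma~\ref{thm:main2}(3)(a)--(b), hence $\gamma(G)=|A|$ with $A=V_H$ the (or a) smaller side. Connectivity of $G$ follows from Proposition~\ref{Proposition-first}(2) together with condition~(1), which guarantees every edge of $H$ lies in a positively $f$-valued complete subgraph. For (a): a support vertex of $G$ in $B$ has the form $(K,i)$ with $N_G((K,i))=V_K$ by Proposition~\ref{Proposition-first}(1)(c); if it is a support, $V_K$ contains a leaf $x\in A$ of $G$, and $N_G(x)=\bigcup_{K'\in\mathcal K_H(x)}\mathcal F_{K'}$ being a single vertex forces $\mathcal K_H^f(x)=\{H[x]\}$ with $f(H[x])=1$, so in fact $V_K=\{x\}$, i.e.\ $K$ is the trivial subgraph\,--\,so the only supports in $B$ adjacent to $x$ come from trivial cliques, and the structure is easily checked; meanwhile the ``non-leaf neighbors are supports'' part translates, via condition (2) in contrapositive form, into the statement that if $(H[u,v],i)$ is adjacent to the non-leaf $u$ then $u$ has a leaf neighbor, i.e.\ $u$ is a support. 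For (b): if $x,y\in A-(L_G\cup S_G)$ with $d_G(x,y)=2$, then $x,y$ lie in a common clique $K$ of $H$ with $f(K)\ge 1$; in particular $xy\in E_H$. Since $x,y$ are not supports, $f(H[x])=f(H[y])=0$, so condition~(2) gives $f(H[x,y])\ge 2$, and the two vertices $(H[x,y],1),(H[x,y],2)$ of $B$ both have neighborhood exactly $\{x,y\}$, as required.

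The main obstacle I anticipate is the careful bookkeeping around leaves and supports in the forward direction: one must pin down exactly when a vertex of $A$ or of $B$ is a leaf or a support of $G=B_f(H)$ in terms of the values of $f$ on trivial and non-trivial complete subgraphs, handle the degenerate small cases ($G=K_2$, isolated-edge-like configurations, the case $|A|=|B|$ where the roles of the sides are symmetric), and make sure that the translation of ``$u\notin L_G\cup S_G$'' into ``$f(H[u])=0$ and $u$ is not forced to be a support'' is airtight in both directions. The clique/path arguments (conditions (1) and (2), connectivity) are routine given the earlier results; it is the leaf–support dictionary that requires precision, since Lemma~\ref{thm:main2}(3)(a) involves both ``weak support'' and ``non-leaf neighbors are supports,'' and one must confirm these are exactly what conditions (1)–(2) encode, with no gap in either direction.
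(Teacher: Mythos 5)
Your forward direction follows the paper's proof essentially verbatim (same $H$, same $f$, same use of Lemma \ref{thm:main2}(3a)--(3b)), but one justification is misassigned: you claim that vertices of $A$ are never leaves of $G$ unless $G=K_2$ (false -- in $P_4$ the smaller side contains two leaves) and that property (3a) rules out $u,v$ being supports. It is the other way around: $f(H[u])=f(H[v])=0$ directly says no vertex of $B$ has neighborhood $\{u\}$ or $\{v\}$, i.e.\ neither is a support; and property (3a), applied to the common neighbor $b$ of $u$ and $v$ in $B$, is what rules out $u$ or $v$ being a leaf (a support in $B$ is a \emph{weak} support, so not both can be leaves, and if exactly one were a leaf the other would be a support, contradicting $f(H[v])=0$). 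As written, the step ``$u,v\in A-(L_G\cup S_G)$'' is not established.

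For the converse you take a genuinely different route from the paper -- verifying conditions (3a) and (3b) of Lemma \ref{thm:main2} rather than, as the paper does, proving $\gamma(B_f(H))=|A|$ directly by an exchange argument on a minimum dominating set $D$ maximizing $|D\cap A|$. Your route is viable in principle, but it has two genuine gaps. First, Lemma \ref{thm:main2} carries the hypothesis $1\le|A|\le|B|$, and you simply assert that $A=V_H$ is ``the (or a) smaller side'' without proof; the paper devotes a full paragraph to establishing $|A|\le|B|$ by constructing an $A$-saturating matching (matching each $v$ with $f(H[v])\ge 1$ to $(H[v],1)$, and each remaining $v$ to a second copy $(H[v,v_j],2)$ supplied by property (2) or to a clique supplied by property (1)). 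Without this, the appeal to Lemma \ref{thm:main2} is unjustified. Second, your verification of (3a) contains a false deduction: from $N_G(x)$ being a single vertex $(K,i)$ you conclude that the unique positively $f$-valued clique containing $x$ is $H[x]$, hence $V_K=\{x\}$; but that unique clique is $K$ itself and need not be trivial (take $H=H[x,y]=K_2$ with $f(H[y])=f(H[x,y])=1$, $f(H[x])=0$: then $B_f(H)$ is a $P_4$ in which $x$ is a leaf whose support is $(H[x,y],1)$ with $V_{H[x,y]}=\{x,y\}$). The correct verification of (3a) -- that such a $(K,i)$ is a weak support and its non-leaf neighbors are supports -- genuinely needs property (2) applied to edges $xz$ with $z\in V_K$, and your sketch does not supply it. The paper's direct domination argument sidesteps this leaf--support bookkeeping entirely, which is precisely the part you flag as delicate and then leave unresolved.
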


\begin{proof}
Assume first that $\gamma(G) = |A|$. Then $G$ has the properties (3a) and (3b) of Lemma \ref{thm:main2}. Let $H=(V_H,E_H)$ be a graph in which $V_H=A$ and $E_H=\{xy \colon x,y \in A \textrm{ and } d_G(x,y)=2\}$, and let $f\colon {\cal K}_H \to \mathbb{N}$ be a function such that $f(K)= |\{x\in B\colon N_G(x)=V_K\}|$ for each $K\in {\cal K}_H$. Then $G$ is the bipartization $B_f(H)$ of $H$ with respect to $f$, as we have shown in the proof of Theorem \ref{thm:bip_existence}. It is obvious that if $H=K_1$, then ${\cal K}_H=\{H\}$ and it must be $f(H)\ge 1$ (as otherwise $G=B_f(H)$ would be a graph of order one). Thus assume that $H$ is non-trivial. Now it remains to prove that $f$ has the properties (1) and (2).

Let $uv$ be an edge of $H$ such that $f(H[u,v])=0$. Suppose on the contrary that $f(H')=0$ for every complete subgraph $H'$ containing the edge $uv$. Then the vertices $u$ and $v$ do not share a neighbor in $B_f(H)=G$, so $d_G(u,v) > 2$ and $uv$ is not an edge in $H$, a~contradiction. This proves the property (1).

Now let $uv$ be an edge of $H$ such that $f(H[u])=f(H[v])=0$. From these assumptions it follows that $d_G(u,v)=2$ and neither $u$ nor $v$ is a support vertex in $G=B_f(H)$. Now we shall prove that none of the vertices $u$ and $v$ is a leaf in $G$. First, because $u, v\in A$ and they have a common neighbor, it follows from the first part of the property (3a) of Lemma \ref{thm:main2} that at least one of the vertices $u$ and $v$ is not a leaf in $G$. Suppose now that exactly one of the vertices $u$ and $v$ is a leaf in $G$, say $u$ is a leaf. Then it follows from the second part of the property (3a) of Lemma \ref{thm:main2} that $v$ is a support vertex in $G=B_f(H)$ and, therefore, $f(H[v])>0$, a contradiction. Consequently, both $u$ and $v$ are elements of $A-N_G[L_G]$. Thus, since $d_G(u,v)=2$, the property (3b) of Lemma \ref{thm:main2} implies that there are at least two vertices $\bar{u}, \bar{v} \in B$ such that $N_{G}(\bar{u})=N_{G}(\bar{u})=\{u,v\}$. Therefore $f(H[u,v])= |\{x\in B\colon N_G(x)=\{u, v\}\}|\ge |\{\bar{u}, \bar{v}\}|=2$ and this proves the property (2).

Assume now that $H$ is a connected graph, and $f\colon {\cal K}_H \to \mathbb{N}$ is a non-zero function having the properties (1) and (2). We shall prove that in the bipartization $B_f(H)=((A,B),E_{B_f(H)})$, where $A=V_H$ and $B=\bigcup_{K\in {\cal K}_H} {\cal F}_K$, is $|A|\le |B|$ and $\gamma(B_f(H))=|A|$. This is obvious if $H$ is a graph of order 1. Thus assume that $H$ is a graph of order at least 2. From the property (1) it follows that $B_f(H)$ is a connected graph. We first prove the inequality $|A|\le |B|$. To prove this, it suffices to show that $B_f(H)$ has an $A$-saturating matching. We begin by dividing $A=V_H$ into two subsets $V_H^1= \{v\in V_H\colon f(H[v])\ge 1\}$ and $V_H^0= \{v\in V_H\colon f(H[v])=0\}$. It is obvious that the edge-set $M^1= \{v(H[v],1) \colon v\in V_H^1\}$ is a $V_H^1$-saturating matching in $B_f(H)$. Next, we order the set $V_H^0$ in an arbitrary way, say $V_H^0=\{v_1,\ldots,v_n\}$. Now, depending on this order, we consecutively choose edges $e_1, \ldots, e_n$ in such a way that $M^1\cup \{e_1,\ldots, e_i\}$ is a~$(V_H^1\cup \{v_1,\ldots,v_i\})$-saturating matching in $B_f(H)$.

Assume that we have already chosen a $(V_H^1\cup \{v_1,\ldots,v_{i-1}\})$-saturating matching $M^1\cup\{e_1,\ldots, e_{i-1}\}$ in $B_f(H)$, and consider the next vertex $v_i \in V_H^0$. If $N_{H}(v_i)\cap V_H^0\not=\emptyset$, say $v_j\in N_{H}(v_i)\cap V_H^0$, then $f(H[v_j])=0$ and therefore $f(H[v_i,v_j])\ge 2$ (by the property (2)) and the edge $e_i= v_i(H[v_i,v_j],1)$ if $j>i$ ($e_i= v_i(H[v_i,v_j],2)$ if $j<i$) together with $M^1\cup \{e_1,\ldots, e_{i-1}\}$ form a  $(V_H^1\cup \{v_1,\ldots,v_i\})$-saturating matching in $B_f(H)$. Thus assume that $N_H(v_i) \subseteq V_H^1$. Let $v$ be a neighbor of $v_i$ in $H$. If $f(H[v_i,v])\ge 1$, then the edge $e_i= v_i(H[v_i,v],1)$ has the desired property. Finally, if $f(H[v_i,v])=0$, then $f(H')>0$ for some complete subgraph $H'$ of $H$ containing the edge $v_iv$ (by the property (1)) and in this case the edge $e_i= v_i(H',1)$ has the desired property (as $N_H(v_i) \subseteq V_H^1$). Repeating this procedure as many times as needed, an $A$-saturating matching in $B_f(H)$ can be obtained.

To complete the proof, it remains to show that  $\gamma(B_f(H)) = |A|$. In a standard way, suppose to the contrary that $\gamma(B_f(H)) < |A|$. Let $D$ be a minimum dominating set of $B_f(H)$ with  $|D \cap A|$ as large as possible. Since $\gamma(B_f(H))=|D|$, the inequality $\gamma(B_f(H))<~|A|$ implies that $|A - D|> |D \cap B| \ge 1$. In addition, since $|D \cap A|$ is as large as possible, the set $V_H^1$ $(= \{v\in V_H\colon f(H[v])\ge 1\})$ is a subset of $D \cap A$, while $A - D$ is a subset of $V_H^0$ $(= \{v\in V_H\colon f(H[v])=0\})$. Now, because $|A-D|>|D\cap B|$ and each vertex of $A-D$ has a~neighbor in $D\cap B$, the pigeonhole principle implies that there are two vertices $x$ and $y$ in $A-D$  which are adjacent to the same vertex in $D\cap B$. Hence, $x$ and $y$ are adjacent in $H$ (by the definition of $B_f(H)$). Now, since $f(H[x])= f(H[y])=0$, the property (2) implies that $f(H[x,y])\ge 2$. Next, since $N_{B_f(H)}((H[x,y],1))= N_{B_f(H)}((H[x,y],2))= \{x,y\}$ and $\{x,y\}\cap D=\emptyset$, the vertices $(H[x,y],1)$ and $(H[x,y],2)$ belong to $D\cap B$. Consequently, it is easy to observe that the set $D'= (D-\{(H[x,y],1), (H[x,y],2)\})\cup \{x,y\}$ is a~dominating set of $B_f(H)$, which is impossible as $|D'|=|D|$ and $|D' \cap A| > |D \cap A|$. This completes the proof.
\end{proof}

\begin{example} {\rm The graph $H$ and the function $f\colon {\cal K}_H \to \mathbb{N}$ given in Example~\ref{przyklad-1} have the properties (1) and (2) of Theorem \ref{thm:dom_main2} and therefore the bipartization $B_f(H)$ belongs to the family ${\cal B}$, that is, $\gamma(B_f(H))=|A|$, where $A$ is the smaller of two partite sets of $B_f(H)$ shown in Fig.~\ref{rys-grafu-H_f}. \\
\indent The graph $F$ and the function $\overline{g}$ given in Fig.~\ref{rys-grafu-bipatrization-dwa+} do not satisfy the condition (2) of Theorem \ref{thm:dom_main2}. However, the bipartization $G=B_{\overline{g}}(F)$ is a graph belonging to the family~${\cal B}$ since $G$ is also the bipartization $B_{\overline{f}}(H)$, with $H$ and $\overline{f}$ given in Fig.~\ref{rys-grafu-bipatrization-dwa+} and
possessing properties (1) and (2) of Theorem~\ref{thm:dom_main2}.   \\
\indent It is obvious that the complete bipartite graph $K_{m,n}$ is the bipartization of the complete graph $K_m$ (resp.\ $K_n$) with respect to the function $f\colon {\cal K}_{K_m} \to \{0,n\}$, where $f(K)=0$ if and only if $K\in {\cal K}_{K_m}-\{K_m\}$ (resp.\ $g\colon {\cal K}_{K_n} \to \{0,m\}$, where $g(K)=0$ if and only if $K\in {\cal K}_{K_n}-\{K_n\}$). It is also evident that if $\min\{m,n\}\ge 3$, then $K_{m,n}$ does not belong to the family ${\cal B}$ (as $\gamma(K_{m,n})=2<\min\{m,n\}$), and neither $K_m$ and $f$ nor $K_n$ and $g$ possess the property (2) of Theorem~\ref{thm:dom_main2}. }\end{example}

Finally, as an immediate consequence of Theorems \ref{thm:bip_tree} and \ref{thm:dom_main2} we have the following simple characterization of trees in which the domination number is equal to the size of a~smaller of its partite sets.
All such trees are bipartizations of block graphs.

\begin{cor} \label{wniosek-2} Let $T=((A,B),E_T)$ be a tree in which $1\le |A|\le |B|$. Then $\gamma(T) =|A|$ if and only if $T$ is the bipartization $B_f(H)$ of a block graph $H$ with respect to a non-zero function $f\colon {\cal K}_H \to \mathbb{N}$ and $f$ has the following two properties: \begin{itemize} \item[$(1)$] $f(K)=1$ if $K$ is a block of $H$, and $f(K')=0$ if $K'$ is a non-trivial complete subgraph of $H$ which is not a block of $H$. \item[$(2)$] $\max\{f(H[u]),f(H[v])\}\ge 1$ for every edge $uv$ of $H$
$($or,  equivalently, the set $\{v\in V_H\colon f(H[v])\ge 1\}$ is a covering set of $H$$)$. \end{itemize} \end{cor}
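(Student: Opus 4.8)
The plan is to derive Corollary \ref{wniosek-2} by combining Theorem \ref{thm:bip_tree} (which describes exactly when a bipartization is a tree) with Theorem \ref{thm:dom_main2} (which describes exactly when a bipartization $B_f(H)$ with a connected $H$ belongs to $\mathcal{B}$). Since a tree is a connected bipartite graph, both theorems apply whenever $T=B_f(H)$ with $H$ connected; the task is simply to merge the two lists of conditions and simplify.

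First I would prove the forward direction. Suppose $T=((A,B),E_T)$ is a tree with $1 \le |A| \le |B|$ and $\gamma(T)=|A|$. By Theorem \ref{thm:dom_main2}, $T = B_f(H)$ for some connected graph $H$ and some non-zero $f$ satisfying properties (1) and (2) of that theorem. In particular, property (1) of Theorem \ref{thm:dom_main2} guarantees that every edge of $H$ lies in a positively $f$-valued complete subgraph, so the hypothesis of Theorem \ref{thm:bip_tree} is met; since $B_f(H)=T$ is a tree, conditions (1)--(3) of Theorem \ref{thm:bip_tree} hold. Conditions (2) and (3) of Theorem \ref{thm:bip_tree} say precisely that $H$ is a block graph and that, among non-trivial complete subgraphs, $f$ takes the value $1$ exactly on the blocks (and, by (1) of that theorem, never exceeds $1$); this is exactly condition (1) of the corollary. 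For condition (2) of the corollary: take any edge $uv$ of $H$. If $f(H[u])=f(H[v])=0$, then property (2) of Theorem \ref{thm:dom_main2} forces $f(H[u,v]) \ge 2$, contradicting condition (1) of Theorem \ref{thm:bip_tree}. Hence $\max\{f(H[u]),f(H[v])\} \ge 1$, which is condition (2) of the corollary; the parenthetical equivalence with ``$\{v : f(H[v]) \ge 1\}$ is a covering set of $H$'' is immediate from the definition of a covering set.

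Next I would prove the converse. Suppose $H$ is a block graph and $f$ is a non-zero function satisfying conditions (1) and (2) of the corollary. I first check that $T := B_f(H)$ is a tree via Theorem \ref{thm:bip_tree}: condition (1) of the corollary gives $f(K)\le 1$ on non-trivial complete subgraphs and the exact characterization of where $f=1$ among them, so conditions (1)--(3) of Theorem \ref{thm:bip_tree} hold; moreover every edge $uv$ of $H$ lies in a block, which is a complete subgraph with $f$-value $1$, so the hypothesis ``every edge belongs to a positively $f$-valued complete subgraph'' is satisfied, and thus $B_f(H)$ is a tree. Then I verify the hypotheses of Theorem \ref{thm:dom_main2}: property (1) there holds because every edge lies in a block (an $f$-positive complete subgraph), and property (2) there is vacuous, since condition (2) of the corollary ensures there is no edge $uv$ with $f(H[u])=f(H[v])=0$. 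Theorem \ref{thm:dom_main2} then yields $|A| \le |B|$ and $\gamma(B_f(H))=|A|$, completing the proof.

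I do not expect a serious obstacle here: the corollary is essentially a bookkeeping exercise that intersects the two earlier characterizations. The one point requiring a little care is the handling of trivial ($1$-vertex) complete subgraphs, i.e.\ the vertex values $f(H[v])$: Theorem \ref{thm:bip_tree} says nothing constraining them, so their role is governed entirely by conditions (1)--(2) of the corollary and by property (2) of Theorem \ref{thm:dom_main2}, and one must make sure the translation between ``$f(H[u,v]) \ge 2$ is forbidden'' and ``some vertex value is positive on each edge'' is done cleanly in both directions. Also worth a sentence is the degenerate case $H=K_1$, where $f(H)\ge 1$ is needed for $B_f(H)$ to be a nontrivial tree; this matches the ``non-zero function'' hypothesis and causes no trouble.
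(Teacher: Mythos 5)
Your proposal is correct and is precisely the derivation the paper intends: the corollary is stated there without proof, as an ``immediate consequence of Theorems \ref{thm:bip_tree} and \ref{thm:dom_main2}'', and your argument supplies exactly the bookkeeping that merges the two characterizations in both directions. The only caveat is the degenerate case $H=K_1$ that you already flagged: for stars $K_{1,n}$ with $n\ge 2$ one must read ``block'' in condition $(1)$ of the corollary as ``non-trivial block'' (consistently with condition $(3)$ of Theorem \ref{thm:bip_tree}), since a literal reading would force $f(H)=1$ there and exclude these trees.
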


\end{document}